\newtheorem{propo}{{\bf Proposition}}[section]
\newtheorem{coro}[propo]{{\bf Corollary}}
\newtheorem{lemma}[propo]{{\bf Lemma}} 
\newtheorem{theor}[propo]{{\bf Theorem}} 
\newtheorem{ex}{{\sc Example}}[section]
\newenvironment{proof}{{\bf Proof.}}{$\Box$}
\begin{document}

\vspace*{1.0in}

\begin{center} ON CONJUGACY OF MAXIMAL SUBALGEBRAS OF SOLVABLE LIE ALGEBRAS  
\end{center}
\bigskip

\begin{center} DAVID A. TOWERS 
\end{center}
\bigskip

\begin{center} Department of Mathematics and Statistics

Lancaster University

Lancaster LA1 4YF

England

d.towers@lancaster.ac.uk 
\end{center}
\bigskip

\begin{abstract} The purpose of this paper is to consider when two maximal subalgebras of a finite-dimensional solvable Lie algebra $L$ are conjugate, and to investigate their intersection.
\par 
\noindent {\em Mathematics Subject Classification 2010}: 17B05, 17B30, 17B40, 17B50.
\par
\noindent {\em Key Words and Phrases}: Lie algebras, solvable, maximal subalgebra, conjugate, complement, chief factor.
\end{abstract}

Throughout $L$ will denote a finite-dimensional solvable Lie algebra over a field $F$. Let $x \in L$ and let ad\,$x$ be the corresponding inner derivation of $L$. If $F$ has characteristic zero suppose that (ad\,$x)^n = 0$ for some $n$; if $F$ has characteristic $p$ suppose that $x \in I$ where $I$ is a nilpotent ideal of $L$ of class less than $p$. Put
\[
\hbox{exp(ad\,}x) = \sum_{r=0}^{\infty} \frac{1}{r!}(\hbox{ad\,}x)^r.
\]
Then exp(ad\,$x)$ is an automorphism of $L$. We shall call the group ${\mathcal I}(L)$ generated by all such automorphisms the group of {\em inner automorphisms} of $L$. More generally, if $B$ is a subalgebra of $L$ we denote by ${\mathcal I}(L:B)$ the group of automorphisms of $L$ generated by the exp(ad\,$x)$ with $x \in B$. Two subsets $U, V$ are {\em conjugate under ${\mathcal I}(L:B)$} if $U = \alpha(V)$ for some $\alpha \in {\mathcal I}(L:B)$; they are {\em conjugate in $L$} if they are conjugate under ${\mathcal I}(L) = {\mathcal I}(L:L)$.
\par

If $U$ is a subalgebra of $L$, the {\em centraliser} of $U$ in $L$ is the set $C_L(U) = \{ x \in L : [x,u] = 0 \}$. In \cite{cohom} Barnes showed that if $A$ is a minimal ideal of $L$ that is equal to its own centraliser in $L$, then $A$ is complemented in $L$ and all complements are conjugate under ${\mathcal I}(L:A)$. In \cite{stit} Stitzinger extended this result by finding necessary and sufficient conditions for two complements of an arbitrary minimal ideal of $L$ to be conjugate.  

\begin{theor}\label{t:bij} (\cite[Theorem 1]{stit}) Let $A$ be a minimal ideal of the solvable Lie algebra $L$. Then there is a bijection between the set ${\mathcal M}$ of conjugacy classes of complements to $A$ under ${\mathcal I}(L:A)$ and the set ${\mathcal N}$ of complements to $A$ in $C_L(A)$ that are ideals of $L$.
\end{theor}

\begin{coro}\label{c:bij} (\cite[Corollary]{stit}) Suppose that $L$ is a solvable Lie algebra and let $M, K$ be complements to a minimal ideal $A$ of $L$. Then $M$ and $K$ are conjugate under ${\mathcal I}(L:A)$ if and only if $M \cap C_L(A)$ = $K \cap C_L(A)$.
\end {coro}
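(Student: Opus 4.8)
The plan is to deduce the corollary from Theorem~\ref{t:bij} by showing that the bijection there is the one induced by $M \mapsto M \cap C_L(A)$. Write $C = C_L(A)$. Since $A$ is a minimal ideal of the solvable algebra $L$ it is abelian, so $A \subseteq C$; moreover $C$ is an ideal of $L$, centralisers of ideals being ideals. For $a \in A$ one has $[a,L] \subseteq A$ and $[a,A] = 0$, so $(\hbox{ad}\,a)^2 = 0$ and the generators of ${\mathcal I}(L:A)$ are precisely the maps $1 + \hbox{ad}\,a$ with $a \in A$. These are the only automorphisms I shall need to track.

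For the forward implication I would argue directly, without invoking Theorem~\ref{t:bij}. The key observation is that every $\alpha \in {\mathcal I}(L:A)$ fixes $C$ pointwise: if $c \in C$ and $a \in A$ then $(1 + \hbox{ad}\,a)(c) = c + [a,c] = c$ since $[c,a] = 0$, and a product of such maps therefore also fixes $c$. Hence if $K = \alpha(M)$ with $\alpha \in {\mathcal I}(L:A)$, then, using that $\alpha(C) = C$ and $\alpha$ is the identity on $C$, I get $K \cap C = \alpha(M) \cap \alpha(C) = \alpha(M \cap C) = M \cap C$. This shows that conjugate complements have the same intersection with $C$.

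For the converse I would use Theorem~\ref{t:bij}. First one checks that $M \cap C$ genuinely lies in ${\mathcal N}$: by the modular law, valid since $A \subseteq C$, we have $C = C \cap (M + A) = (M \cap C) + A$ with $(M \cap C) \cap A \subseteq M \cap A = 0$, so $M \cap C$ complements $A$ in $C$; and it is an ideal of $L$ because $[L, M \cap C] = [M + A, M \cap C] = [M, M \cap C] \subseteq M$ while also $[L, M \cap C] \subseteq C$, giving $[L, M \cap C] \subseteq M \cap C$. Together with the forward implication this shows that $[M] \mapsto M \cap C$ is a well-defined map ${\mathcal M} \to {\mathcal N}$, which I claim is exactly the bijection of Theorem~\ref{t:bij}. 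Granting this, $M \cap C = K \cap C$ forces $[M] = [K]$, i.e. $M$ and $K$ are conjugate, which is the required implication.

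The main obstacle is the injectivity just used, i.e. recovering a conjugating automorphism from the equality $M \cap C = K \cap C$. Here I would write $K = \{m + \delta(m) : m \in M\}$ for a linear cocycle $\delta : M \to A$, possible since both are complements to $A$, note that $\delta$ is a coboundary $m \mapsto [a,m]$ precisely when $K = (1 + \hbox{ad}\,a)(M)$, and compute that $K \cap C = \{m + \delta(m) : m \in M \cap C\}$; the hypothesis $K \cap C = M \cap C$ then forces $\delta$ to vanish on $M \cap C = C_M(A)$, the kernel of the action of $M$ on $A$. The crux is to show that a cocycle vanishing on this kernel must be a coboundary; this is where the irreducibility of $A$ as an $L$-module and the cohomological content behind Theorem~\ref{t:bij} enter, and it is the step I expect to require the most care.
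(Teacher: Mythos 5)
The paper states this corollary without proof (it is quoted from Stitzinger), so there is no in-text argument to compare against; judged on its own terms, your forward implication is correct and complete. Since $A$ is a minimal ideal of a solvable algebra it is abelian, so each generator of ${\mathcal I}(L:A)$ is $1+\hbox{ad\,}a$ with $(\hbox{ad\,}a)^2=0$, such a map fixes $C=C_L(A)$ pointwise, and hence $\alpha(M)\cap C=\alpha(M\cap C)=M\cap C$. Your verification that $M\cap C$ is an ideal of $L$ complementing $A$ in $C$ is also correct.

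The converse, however, contains a genuine gap, and it sits exactly where you suspect. Theorem \ref{t:bij} asserts only the \emph{existence} of a bijection ${\mathcal M}\to{\mathcal N}$; it does not identify that bijection with $[M]\mapsto M\cap C$, so the existence of some bijection gives no injectivity for your particular map --- you cannot ``grant'' that identification without essentially reproving the theorem. Your fallback cocycle computation is set up correctly ($K=\{m+\delta(m):m\in M\}$, $K\cap C=\{m+\delta(m):m\in M\cap C\}$, hence $\delta$ vanishes on $C_M(A)=M\cap C$), but the decisive step --- that a cocycle $\delta:M\to A$ vanishing on $C_M(A)$ is inner --- is precisely the hard content of the corollary and is left unproved; it is not automatic, since $H^1(M,A)$ need not vanish when the action of $M$ on $A$ has a kernel. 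The standard way to close the gap is to quotient by the ideal $N:=M\cap C=K\cap C$: one checks that $(A+N)/N$ is a minimal ideal of $L/N$ equal to its own centraliser (if $[x,A]\subseteq N$ then $[x,A]\subseteq A\cap M=0$), that $M/N$ and $K/N$ are complements to it, and Barnes's theorem quoted in the introduction (\cite{cohom}) then conjugates them by some $1+\hbox{ad\,}(a+N)$ with $a\in A$; since $N\subseteq M\cap K$, this lifts to $K=(1+\hbox{ad\,}a)(M)$. Equivalently, in your language, $\delta$ factors through the faithful irreducible module $A$ for $M/C_M(A)$, where Barnes's vanishing theorem applies. Without this reduction your argument does not yet establish the stated equivalence.
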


Clearly, such complements are maximal subalgebras of $L$. The purpose of this paper is to consider further when two maximal subalgebras of $L$ are conjugate, and to investigate their intersection.

\begin{lemma}\label{l:mon}
Let $L$ be a solvable Lie algebra, and let $M, K$ be two core-free maximal subalgebras of $L$. Then $M, K$ are conjugate under exp(ad\,$a) = 1 + $ad\,$a$ for some $a \in A$; in particular, they are conjugate in $L$.
\end{lemma}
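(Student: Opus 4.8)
The plan is to realise both $M$ and $K$ as complements of a single minimal ideal $A$, apply Corollary~\ref{c:bij}, and then observe by a short computation that conjugacy under ${\mathcal I}(L:A)$ already amounts to conjugacy under a single map $1+\hbox{ad}\,a$. First I would fix a minimal ideal $A$ of $L$. Since $L$ is solvable, $A$ is abelian, so for any $a\in A$ and $x\in L$ we have $[a,x]\in A$ and hence $(\hbox{ad}\,a)^2x=[a,[a,x]]\in[A,A]=0$. Thus $(\hbox{ad}\,a)^2=0$, the automorphism $\exp(\hbox{ad}\,a)$ is defined (in characteristic $p$ because $A$ is nilpotent of class $1<p$), and it equals $1+\hbox{ad}\,a$, exactly the form asserted in the statement. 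Because $M$ is core-free it contains no nonzero ideal of $L$, so $A\not\subseteq M$; maximality of $M$ then forces $M+A=L$. Moreover $M\cap A$ is invariant under $M$ (as $[M,A]\subseteq A$ and $[M,M]\subseteq M$) and under $A$ (as $[A,A]=0$), hence is an ideal of $L$ lying in the minimal ideal $A$, so $M\cap A=0$. Therefore $M$ is a complement to $A$, and the same argument shows $K$ is a complement to $A$.

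Next I would verify the hypothesis of Corollary~\ref{c:bij}, namely $M\cap C_L(A)=K\cap C_L(A)$, by showing both sides vanish. The key point is that $M\cap C_L(A)$ is an ideal of $L$: for $z\in M\cap C_L(A)$ one has $[M,z]\subseteq M$ and $[A,z]=0$, so $[L,z]=[M+A,z]\subseteq M$, while $z\in C_L(A)$ together with the fact that $C_L(A)$ is an ideal gives $[L,z]\subseteq C_L(A)$. Hence $[L,\,M\cap C_L(A)]\subseteq M\cap C_L(A)$, so $M\cap C_L(A)$ is an ideal of $L$ contained in $M$, and core-freeness forces $M\cap C_L(A)=0$; likewise $K\cap C_L(A)=0$. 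Corollary~\ref{c:bij} now yields that $M$ and $K$ are conjugate under ${\mathcal I}(L:A)$.

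Finally I would upgrade this to conjugacy under a single $1+\hbox{ad}\,a$. Since $A$ is abelian, for $a,b\in A$ and $x\in L$ we have $(\hbox{ad}\,a)(\hbox{ad}\,b)x=[a,[b,x]]\in[A,A]=0$, so $\hbox{ad}\,a\circ\hbox{ad}\,b=0$ and therefore $(1+\hbox{ad}\,a)(1+\hbox{ad}\,b)=1+\hbox{ad}(a+b)$. Consequently the set $\{\,1+\hbox{ad}\,a:a\in A\,\}$ is closed under composition and under inversion (as $(1+\hbox{ad}\,a)^{-1}=1+\hbox{ad}(-a)$), contains the identity, and contains every generator $\exp(\hbox{ad}\,a)$ of ${\mathcal I}(L:A)$; hence it is precisely ${\mathcal I}(L:A)$. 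The conjugating element produced in the previous step is therefore of the form $1+\hbox{ad}\,a$ with $a\in A$, and as it lies in ${\mathcal I}(L)$ the subalgebras $M$ and $K$ are also conjugate in $L$.

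I expect the main obstacle to be the middle step: one must notice that the centraliser intersection $M\cap C_L(A)$ is an ideal of $L$ — this is where core-freeness and the defining property of the centraliser combine — so that Corollary~\ref{c:bij} can be invoked. The remaining content is the observation, special to abelian ideals, that the products $\exp(\hbox{ad}\,a)$ collapse to the single maps $1+\hbox{ad}\,a$, making ``conjugate under ${\mathcal I}(L:A)$'' literally mean ``conjugate under one $1+\hbox{ad}\,a$.''
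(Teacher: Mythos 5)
Your proof is correct, and it arrives at the same structural reduction as the paper --- namely that $M$ and $K$ are both complements to a minimal (hence abelian) ideal $A$, which is how you correctly read the undeclared symbol $A$ in the statement --- but it proceeds by a genuinely different route from there. The paper's proof is essentially two citations: it quotes \cite[Lemma 1.3]{perm} for the facts $L = A \oplus M = A \oplus K$, $C_L(A) = A$ and the uniqueness of the minimal ideal, and then invokes \cite[Theorem 1.1]{bn}, which directly delivers conjugacy under a single map $1 + \hbox{ad}\,a$. You instead prove the complement decomposition from scratch (your observation that $M \cap A$ and $M \cap C_L(A)$ are ideals of $L$ contained in the core-free subalgebra $M$, hence zero, is exactly the content of the quoted lemma from \cite{perm}), feed the resulting condition $M \cap C_L(A) = 0 = K \cap C_L(A)$ into Stitzinger's Corollary \ref{c:bij} to get conjugacy under ${\mathcal I}(L:A)$, and then supply the extra observation --- valid because $A$ is abelian, so $\hbox{ad}\,a \circ \hbox{ad}\,b = 0$ and $(1+\hbox{ad}\,a)(1+\hbox{ad}\,b) = 1 + \hbox{ad}(a+b)$ --- that ${\mathcal I}(L:A)$ is precisely the set of maps $1 + \hbox{ad}\,a$ with $a \in A$. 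What your version buys is self-containedness: it relies only on a result already stated in the paper plus elementary computations, whereas the paper's proof leans on two further external references; the cost is length, and your final group-collapse step is exactly the work that the citation to Barnes--Newell does for the author. All the individual steps check out, including the verification that $\exp(\hbox{ad}\,a)$ is defined in characteristic $p$ because $A$ is a nilpotent ideal of class $1 < p$.
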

\begin{proof} Let $A$ be a minimal abelian ideal of $L$. Then $L = A \oplus M = A \oplus K$, $C_L(A) = A$ and $A$ is the unique minimal ideal of $L$, by \cite[Lemma 1.3]{perm}. The result, therefore, follows from \cite[Theorem 1.1]{bn}.
\end{proof}
\bigskip
 
If $U$ is a subalgebra of $L$, its {\em core}, $U_L$, is the largest ideal of $L$ contained in $U$. 
 
\begin{theor}\label{t:core} Suppose that $L$ is a solvable Lie algebra over a field $F$. If $F$ has characteristic $p$ suppose further that $L^2$ has nilpotency class less than $p$. Let $M, K$ be maximal subalgebras of $L$. Then $M$ is conjugate to $K$ in $L$ if and only if $M_L = K_L$.
\end{theor}
\begin{proof} Suppose first that $M, K$ are conjugate in $L$, so that $K = \alpha(M)$ for some $\alpha \in {\mathcal I}(L)$. Then it is easy to see that exp(ad\,$x)(M_L) = M_L$ whenever exp(ad\,$x)$ is an automorphism of $L$, whence $K_L = \alpha(M_L) = M_L$.
\par

Conversely, suppose that $M_L = K_L$. Then $M/M_L, K/M_L$ are core-free maximal subalgebras of $L/M_L$, and so are conjugate under ${\mathcal I}(L/M_L:(L/M_L)^2)$, by Lemma \ref{l:mon}. But now $M$ and $K$ are conjugate under ${\mathcal I}(L:L^2)$ by \cite[Lemma 5]{cart}, and so are conjugate in $L$.
\end{proof}
\bigskip

The above result does not hold for all solvable Lie algebras, as the following example shows.

\begin{ex}\label{e:max} Let $F$ be a field of characteristic $p$ and consider the Lie algebra $L = (\oplus_{i=0}^{p-1} Fe_i) \dot{+} Fx \dot{+} Fy$ with $[e_i,x] = e_{i+1}$ for $i = 0, \ldots, p-2$, $[e_{p-1},x] = e_0$, $[e_i,y] = ie_i$ for $i = 0, \ldots,p-1$, $[x,y] = x$, and all other products zero. Let $C$ be a faithful completely reducible $L$-module. Since $L$ is monolithic with monolith $A = \oplus_{i=0}^{p-1} Fe_i$, $C$ has a faithful irreducible submodule $B$. Let $X$ be the split extension of $B$ by $L$. Then $A + Fx + Fy$ and $A + F(x + e_1) + Fy$ are maximal subalgebras of $X$, both of which have $A$ as their core. However, $B$ is the unique minimal ideal of $L$ and these subalgebras are not conjugate under inner automorphisms of the form $1 +$ ad\,$b$, $b \in B$. Since $B$ is the nilradical of $X$, defining other inner automorphisms is problematic. 
\end{ex}

Let  $0 = L_0 < L_1 < \ldots < L_n = L$ be a chief series for $L$ and let $M$ be a maximal subalgebra of $L$. Then there exists $k$ with $0 \leq k \leq n-1$ such that $L_k \subseteq M$ but $L_{k+1} \not \subseteq M$. Clearly $L = M + L_{k+1}$ and $M \cap L_{k+1} = L_k$; we say that the chief factor $L_{k+1}/L_k$ is {\em complemented} by $M$. 

\begin{theor}\label{t:conj} Suppose that $L$ is a solvable Lie algebra over a field $F$. If $F$ has characteristic $p$ suppose further that $L^2$ has nilpotency class less than $p$. Let $A/B$ be a chief factor of $L$ that is complemented by a maximal subalgebra $M$ of $L$. If $K$ is conjugate to $M$ in $L$ then $K =$ exp(ad\,$a)(M)$ for some $a \in A$ and $M \cap K = \{m \in M:[m,a] \in M \}$.
\end{theor}
\begin{proof} We have that $L = A + M$ with $A^2 \subseteq M_L$, $M^2 \subseteq M_L$, $B \subseteq A \cap M_L$, and $M_L = K_L$. Clearly then $B \subseteq A \cap M_L \subseteq A$. Moreover, $A \neq A \cap M_L$ since $A \not \subseteq M$. It follows that $B = A \cap M_L$ because $A/B$ is a chief factor. Thus
$$ \frac{A + M_L}{M_L} \cong \frac{A}{A \cap M_L} = \frac{A}{B}, $$
whence $(A + M_L)/M_L$ is a minimal abelian ideal of $L/M_L$. Lemma \ref{l:mon} implies that $K/M_L =$ exp(ad\,$(a + M_L))(M/M_L)$ for some $a \in A$. 
\par

Now $[L,A] \subseteq B$ or $[L,A] = A$. The former implies that $[L,A] \subseteq M_L$, contradicting the fact that $(A+M_L)/M_L$ is self-centralising in $L/M_L$, by \cite[Lemma 1.4]{perm}. Hence $A = [L,A] \subseteq L^2$, and so exp(ad\,$a)$ is defined. If $x \in$ exp(ad\,$a)(M)$ then $x + M_L =$ exp(ad\,$a)(m) = m + [m,a] + M_L \in K/M_L$ for some $m \in M$, whence $x \in K$. Since exp(ad\,$a)$ is an automorphism of $L$ we must have $K =$ exp(ad\,$a)(M)$.
\par

Finally we have $(M \cap K)/M_L = (M/M_L) \cap (K/M_L) = C_{(M/M_L)}(a + M_L)$ by \cite[Lemma 1.5]{perm}. We infer that $m \in M \cap K \Leftrightarrow [m,a] \in M_L \Leftrightarrow [m,a] \in M$.
\end{proof}
\bigskip

Theorem \ref{t:conj} gave a characterisation of the intersection of two conjugate maximal subalgebras of $L$. Finally we consider the intersection of two non-conjugate maximal subalgebras of $L$.

\begin{theor}\label{t:notconj} Suppose that $L$ is a solvable Lie algebra over a field $F$. Let $M, K$ be maximal subalgebras of $L$, and suppose that $K_L \not \subseteq M_L$. Then $M \cap K$ is a maximal subalgebra of $M$. 
\end{theor}
\begin{proof} We have that $K_L \not \subseteq M$, so $L = M + K_L = M + K$. If $K = K_L$ then $L/K \cong M/(M \cap K)$ and the result is clear. So suppose that $K \neq K_L$. Let $A/K_L$ be a minimal ideal of $L/K_L$. Then $L/K_L = A/K_L \oplus K/K_L$, from \cite[Lemma 1.4]{perm}, giving $A \cap K = K_L$. Also, $A = A \cap (M + K_L) = A \cap M + K_L$, whence 
$$ \frac{A}{K_L} = \frac{A \cap M + K_L}{K_L} \cong \frac{A \cap M}{K_L \cap M}, $$  showing that $(A \cap M)/(K_L \cap M)$ is a minimal ideal of $M/(K_L \cap M) (\cong L/K_L$) and $A \cap M$ is a minmal ideal of $M$. 
Now
$$ \hbox{dim} \left( \frac{M}{M \cap K}\right) \geq \hbox{dim} \left( \frac{A \cap M + M \cap K}{M \cap K}\right) = \hbox{dim} \left( \frac{A \cap M}{K_L \cap M}\right) $$ $$= \hbox{dim} \left( \frac{A}{K_L}\right)
 = \hbox{dim} \left( \frac{L}{K}\right) = \hbox{dim} \left( \frac{M + K}{K} \right) = \hbox{dim} \left( \frac{M}{M \cap K} \right). $$
It follows that $M = A \cap M + M \cap K$ which yields the result.
\end{proof}

\begin{coro}\label{c:notconj} Suppose that $L$ is a solvable Lie algebra over a field $F$. If $F$ has characteristic $p$ suppose further that $L^2$ has nilpotency class less than $p$. Let $M, K$ be maximal subalgebras of $L$ that are not conjugate in $L$. Then $M \cap K$ is a maximal subalgebra of at least one of $M, K$.
\end{coro}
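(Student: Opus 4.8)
The plan is to derive this corollary directly from the two theorems just proved, so that essentially no new argument is needed. The guiding idea is that non-conjugacy, read through Theorem \ref{t:core}, is exactly an inequality of cores, and an inequality of cores is precisely what Theorem \ref{t:notconj} requires as its hypothesis.

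First I would note that the characteristic hypothesis on $L^2$ is imposed here solely in order to make Theorem \ref{t:core} applicable. Since $M$ and $K$ are assumed not to be conjugate in $L$, that theorem gives at once that $M_L \neq K_L$.

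Next, from $M_L \neq K_L$ I would observe that at least one of the inclusions $K_L \subseteq M_L$ and $M_L \subseteq K_L$ must fail, since two distinct sets cannot each be contained in the other. Hence either $K_L \not\subseteq M_L$ or $M_L \not\subseteq K_L$ (possibly both). This is the only combinatorial step in the argument, and it is elementary.

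Finally I would apply Theorem \ref{t:notconj} in whichever of these two directions holds. If $K_L \not\subseteq M_L$, the theorem shows directly that $M \cap K$ is a maximal subalgebra of $M$; if instead $M_L \not\subseteq K_L$, then interchanging the roles of $M$ and $K$ in the theorem shows that $M \cap K$ is a maximal subalgebra of $K$. In either case $M \cap K$ is maximal in at least one of $M, K$, which is the assertion. I anticipate no genuine obstacle, since all the substance lies in the preceding theorems; the only point deserving a little care is that Theorem \ref{t:notconj} imposes no restriction on the characteristic, so it may be invoked freely even though the characteristic hypothesis entered the argument only through Theorem \ref{t:core}.
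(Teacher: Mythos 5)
Your argument is correct and is precisely the intended deduction: the paper states this corollary without proof because it follows immediately from Theorem \ref{t:core} (non-conjugacy forces $M_L \neq K_L$, so one of the two non-inclusions must hold) combined with Theorem \ref{t:notconj} applied in the appropriate direction. Your remark that the characteristic hypothesis enters only through Theorem \ref{t:core} is also accurate.
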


\begin{coro}\label{c:max} Suppose that $L$ is a solvable Lie algebra and let $M, K$ be complements to a minimal ideal $A$ of $L$ that are not conjugate in $L$. Then $M \cap K$ is a maximal subalgebra of both $M$ and $K$.
\end {coro}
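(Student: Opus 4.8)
The plan is to reduce the statement to two applications of Theorem \ref{t:notconj}, one giving maximality of $M \cap K$ in $M$ and one giving maximality in $K$. Since complements to the minimal ideal $A$ are maximal subalgebras of $L$, the only thing that needs checking is that the cores are \emph{incomparable}, i.e. that both $K_L \not\subseteq M_L$ and $M_L \not\subseteq K_L$ hold; then Theorem \ref{t:notconj} applied to the pair $(M,K)$ and to the pair $(K,M)$ yields the two conclusions. So the whole argument is really about pinning down $M_L$ and $K_L$.

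First I would identify each core with a centraliser intersection. As $A$ is a minimal ideal of a solvable algebra it is abelian, so $A \subseteq C_L(A)$. Because $M_L$ and $A$ are both ideals of $L$, one has $[M_L,A] \subseteq M_L \cap A \subseteq M \cap A = 0$, giving $M_L \subseteq M \cap C_L(A)$. Conversely, for $a \in A$, $m \in M$ and $m_0 \in M \cap C_L(A)$ the Jacobi identity gives $[a,[m,m_0]] = [[a,m],m_0] + [m,[a,m_0]] = 0$, since $[a,m_0]=0$ and $[a,m] \in A$ centralises $m_0$; hence $[M, M \cap C_L(A)] \subseteq C_L(A)$, and as this bracket also lies in $M$ it lies in $M \cap C_L(A)$. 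Thus $M \cap C_L(A)$ is an ideal of $L$ contained in $M$, so $M \cap C_L(A) \subseteq M_L$, and therefore $M_L = M \cap C_L(A)$. The same computation gives $K_L = K \cap C_L(A)$.

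Next I would run the dimension count. By the modular law, $C_L(A) = A \oplus (M \cap C_L(A)) = A \oplus M_L$ and likewise $C_L(A) = A \oplus K_L$, so $\dim M_L = \dim C_L(A) - \dim A = \dim K_L$. On the other hand, since $M$ and $K$ are not conjugate in $L$ they are not conjugate under the subgroup ${\mathcal I}(L:A) \subseteq {\mathcal I}(L)$, so Corollary \ref{c:bij} gives $M \cap C_L(A) \neq K \cap C_L(A)$, that is $M_L \neq K_L$. Two subspaces of equal dimension with one contained in the other must coincide, so $M_L \neq K_L$ together with $\dim M_L = \dim K_L$ forces $M_L \not\subseteq K_L$ and $K_L \not\subseteq M_L$. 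Applying Theorem \ref{t:notconj} to $(M,K)$ (using $K_L \not\subseteq M_L$) shows $M \cap K$ is maximal in $M$, and applying it to $(K,M)$ (using $M_L \not\subseteq K_L$) shows $M \cap K$ is maximal in $K$.

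The main obstacle, and the point that distinguishes this from the weaker Corollary \ref{c:notconj}, is the equality $\dim M_L = \dim K_L$: it is this that upgrades the mere inequality of cores supplied by Corollary \ref{c:bij} into full incomparability, so that Theorem \ref{t:notconj} can be invoked symmetrically and give maximality in \emph{both} complements rather than in just one. Note also that the identification $M_L = M \cap C_L(A)$ lets us avoid Theorem \ref{t:core}, so no restriction on the characteristic of $F$ is needed.
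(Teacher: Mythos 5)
Your proof is correct and follows essentially the same route as the paper: establish that $M_L$ and $K_L$ are both complements to $A$ in $C_L(A)$ (hence of equal dimension), deduce from non-conjugacy that they are distinct and therefore incomparable, and apply Theorem \ref{t:notconj} in both directions. The only difference is cosmetic: where the paper simply cites Theorem \ref{t:bij} for the identification of the cores with $M \cap C_L(A)$ and $K \cap C_L(A)$, you verify it directly (correctly) via the Jacobi identity and the modular law, and you make explicit the appeal to Corollary \ref{c:bij} that the paper leaves implicit in asserting $M_L \neq K_L$.
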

\begin{proof} By Theorem \ref{t:bij}, both $M_L$ and $K_L$ are complements to $A$ in $C_L(A)$. Since $M_L \neq K_L$ we have $M_L \not \subseteq K_L$ and $K_L \not \subseteq M_L$. The result now follows from Theorem \ref{t:notconj}.
\end{proof}

\end{document}